\newcommand{\field}[1]{\mathbb{#1}}
\newcommand{\CC}{\field{C}}
\newcommand{\TT}{\field{T}}
\newcommand{\ZZ}{\field{Z}}
\newcommand{\RR}{\field{R}}
\newcommand{\Hh}{\mathcal{H}}
\newcommand{\la}{\langle}
\newcommand{\ra}{\rangle}
\newcommand{\ve}{\varepsilon}
\newcommand{\ds}{\displaystyle}
\newtheorem{thm}{Theorem}[section]
\newtheorem{cor}[thm]{Corollary}
\theoremstyle{definition}
\newtheorem{dfn}[thm]{Definition}
\theoremstyle{remark}
\newtheorem{rmk}[thm]{Remark}
\newtheorem{example}[thm]{Example}
\newtheorem*{examples*}{Examples}
\numberwithin{equation}{subsection}
\title{ Cuntz-Pimsner algebras of  group representations}
\author{Valentin Deaconu}
\address{Valentin Deaconu \\ Department of Mathematics (084)\\ University
of Nevada\\ Reno NV 89557-0084\\ USA} \email{vdeaconu@unr.edu}
\keywords{$C^*$-correspondence;  Group representation; Graph algebra;  Cuntz-Pimsner algebra.}
\subjclass{Primary 46L05.}
\date{December 19, 2016}
\begin{document}
\begin{abstract}
Given a locally compact group $G$ and a unitary  representation $\rho:G\to U({\mathcal H})$ on a Hilbert space ${\mathcal H}$, we construct a $C^*$-correspondence ${\mathcal E}(\rho)={\mathcal H}\otimes_{\mathbb C} C^*(G)$ over $C^*(G)$ and study the Cuntz-Pimsner algebra ${\mathcal O}_{{\mathcal E}(\rho)}$. 
We prove that for $G$ compact,  ${\mathcal O}_{{\mathcal E}(\rho)}$ is strong Morita equivalent to a graph $C^*$-algebra.
If $\lambda$ is the left regular representation of an infinite, discrete and amenable group $G$, we show that ${\mathcal O}_{{\mathcal E}(\lambda)}$  is simple and purely infinite, with the same $K$-theory as $C^*(G)$.
If $G$ is compact abelian, any  representation decomposes into characters and determines a skew product graph.
We illustrate with several examples and we compare ${\mathcal E}(\rho)$ with the crossed product $C^*$-correspondence.

\end{abstract}
\maketitle
\section{introduction}

\bigskip

In a seminal paper \cite {K}, A. Kumjian studied the Cuntz-Pimsner algebra associated to a faithful representation $\pi$ of a separable unital $C^*$-algebra $A$ on a Hilbert space ${\mathcal H}$ such that $\pi(A)\cap {\mathcal K}({\mathcal H})=\{0\}$, where ${\mathcal K}({\mathcal H})$ denotes the set of compact operators. He considered the $C^*$-correspondence ${\mathcal E}={\mathcal H}\otimes_{\mathbb C}A$ over $A$ with natural structure and  proved that ${\mathcal O}_{\mathcal E}$ is simple and purely infinite. Moreover,   its isomorphism class depends only on the $K$-theory of $A$ and the class of the unit $[1_A]$. In \cite{BKP} the authors used this type of construction to prove that any order two automorphism of the $K$-theory of a unital Kirchberg algebra $A$ satisfying UCT with $[1_A]=0$ in $K_0(A)$ lifts to an order two automorphism of $A$. For more about lifting automorphisms of $K$-groups to Kirchberg algebras, see \cite {Ka08} and \cite{S}.

In this paper, we study a similar $C^*$-correspondence ${\mathcal E}(\rho)$ over $A=C^*(G)$, the $C^*$-algebra of a locally compact group $G$, where the representation $\pi$ of $A$ is obtained by integrating a unitary representation $\rho :G\to U({\mathcal H})$. In our setting, $\pi$ is not necessarily faithful and the intersection of $\pi(A)$ with the compact operators is not necessarily trivial. In particular, the left multiplication on ${\mathcal E}(\rho)$ may not be injective and the resulting Cuntz-Pimsner algebra may not be simple or purely infinite.
For compact groups, we prove that ${\mathcal O}_{\mathcal E}$ is strong Morita equivalent to a graph $C^*$-algebra. Since any representation decomposes into a direct sum of irreducible representations,  it suffices to study $C^*$-correspondences arising from these representations. We  illustrate how  basic operations on representations  reflect on the associated graphs. 

Graphs associated to representations of finite or compact groups   were already considered in the work of J. McKay \cite{Mc} and of M.H. Mann, I. Raeburn and C.E. Sutherland \cite{MRS}. Given a representation $\rho$ of  a  group $G$ with set of irreducible representations $\hat{G}=\{\pi_j\}$, they consider the graph with vertex set $\hat{G}$ and incidence matrix $[m_{jk}]$ where $\ds \rho\otimes \pi_j=\bigoplus_km_{jk}\pi_k$. This kind of graph has connections with the Doplicher-Roberts algebras, see \cite{MRS} and \cite{De}. The  graphs obtained from our ${\mathcal E}(\rho)$ are in general different from these graphs, since they may have sources.

 For $G$ infinite, discrete and amenable, it is known that the left regular representation $\lambda$ extends to a faithful representation of $C^*(G)$. Since $C^*(G)$ is unital and  the intersection with the compacts is trivial, it follows from \cite{K} that  the Cuntz-Pimsner algebra ${\mathcal O}_{{\mathcal E}(\lambda)}$ is simple and purely infinite, $KK$-equivalent with $C^*(G)$.  It is a challenge to understand the Cuntz-Pimsner algebra associated with any representation of an arbitray group. Sometimes there are connections with $C^*$-algebras of topological graphs, like in the case $G={\mathbb Z}$.

We compare the  $C^*$-correspondence ${\mathcal E}(\rho)$ with the crossed product $C^*$-correspondence ${\mathcal D}(\rho)$ associated to a group action (see \cite{De}). Recall that a representation of dimension $n\ge 2$ determines a quasi-free action on the Cuntz algebra ${\mathcal O}_n$ such that ${\mathcal O}_n\rtimes G\cong {\mathcal O}_{\mathcal D(\rho)}$. In the case $G$ is  abelian, we review some examples  studied by Kumjian and Kishimoto \cite{KK} and  by Katsura (see \cite{Ka03}), from a new viewpoint.

For more $C^*$-correspondences over group $C^*$-algebras, see also the recent preprint \cite{KLQ}.

\bigskip

\section{The $C^*$-correspondence of a group representations}
\bigskip

 Let $G$ be a (second countable) locally compact group. A unitary group representation is a homomorphism $\rho :G\to U({\mathcal H})$, where ${\mathcal H}$ is a (separable) Hilbert space such that for any fixed $\xi\in{\mathcal H}$ the map $g\to\rho(g)\xi$ is continuous. The left regular representation is

\[\lambda:G\to U(L^2(G)), \;\; \lambda(g)\xi(h)=\xi(g^{-1}h).\]

A representation $\rho:G\to U({\mathcal H})$ extends by integration
to \[\pi=\pi_\rho:C^*(G)\to{\mathcal L}({\mathcal H})\] such that 
\[\pi(f)\xi=\int_Gf(t)\rho(t)\xi dt\;  \;\text{for}\; f\in L^1(G), \xi\in {\mathcal H}.\]

\begin{dfn} The $C^*$-correspondence of a representation $\rho$ is  ${\mathcal E}={\mathcal E}(\rho)={\mathcal H}\otimes_{\mathbb C} C^*(G)$ where for $ \xi, \eta\in {\mathcal H}$ and $ a,b \in C^*(G)$  the  inner product is given by
\[\la\xi\otimes a,\eta\otimes b\ra=\la\xi,\eta\ra a^*b\]
and the right and left multiplications are
\[(\xi\otimes a)\cdot b=\xi\otimes ab,\;\;  a\cdot (\xi\otimes b)=\pi(a)\xi\otimes b.\] 
\end{dfn}
It is easy to see that equivalent representations determine isomorphic $C^*$-correspondences.

\begin{thm} If $G$ is a compact group and  $\rho: G\to U({\mathcal H})$  is any representation, then ${\mathcal O}_{{\mathcal E}(\rho)}$ is strong Morita equivalent (SME) to a graph $C^*$-algebra. If $\pi_\rho:C^*(G)\to{\mathcal L}({\mathcal H})$ is injective, then the graph has no sources. If $\rho\cong \rho_1\oplus \rho_2$, then the incidence matrix for the graph of $\rho$ is the sum of incidence matrices for $\rho_1$ and $\rho_2$. 

\end{thm}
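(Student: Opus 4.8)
The plan is to reduce everything to a corner of a familiar algebra via the Peter--Weyl decomposition of $C^*(G)$. Since $G$ is compact, $C^*(G)\cong\bigoplus_{\pi\in\hat G}M_{d_\pi}(\CC)$ (a $c_0$-direct sum), where $\hat G$ is the set of irreducible representations and $d_\pi=\dim\pi<\infty$; let $p_\pi$ be the central projections. Decomposing $\rho$ into isotypic components writes $\Hh=\bigoplus_\pi\Hh_\pi$ with $\Hh_\pi\cong\CC^{d_\pi}\otimes\CC^{m_\pi}$, where $m_\pi\in\{0,1,\dots,\infty\}$ is the multiplicity of $\pi$ in $\rho$, and the integrated representation $\pi_\rho$ acts on $\Hh_\pi$ as the amplification by $\CC^{m_\pi}$ of the standard representation of $M_{d_\pi}(\CC)$. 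In particular $\ker\pi_\rho=\bigoplus_{m_\pi=0}M_{d_\pi}(\CC)$, so $\pi_\rho$ is injective precisely when $m_\pi\ge 1$ for every $\pi$.

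Next I would pick a minimal projection $q_\pi\in M_{d_\pi}(\CC)$ in each block and set $p=\sum_\pi q_\pi\in M(C^*(G))$. Then $pC^*(G)p=\bigoplus_\pi\CC q_\pi\cong c_0(\hat G)$, and $p$ is full because $q_\pi$ generates $M_{d_\pi}(\CC)$ as an ideal. The bimodule $C^*(G)p$ then implements a strong Morita equivalence $C^*(G)\sim c_0(\hat G)$, under which $\Ee(\rho)$ is carried to the compressed correspondence $p\,\Ee(\rho)\,p\cong\pi_\rho(p)\Hh\otimes C^*(G)p$ over $c_0(\hat G)$. Invoking the known fact that Morita equivalent $C^*$-correspondences have strong Morita equivalent Cuntz--Pimsner algebras, it will follow that $\Oo_{\Ee(\rho)}$ is SME to $\Oo_{p\,\Ee(\rho)\,p}$.

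The crux is then to recognise $p\,\Ee(\rho)\,p$ as a graph correspondence. I would compute the blocks $q_{\pi'}\,(p\,\Ee(\rho)\,p)\,q_\pi=\pi_\rho(q_{\pi'})\Hh\otimes M_{d_\pi}(\CC)q_\pi$, each a Hilbert space of dimension $m_{\pi'}d_\pi$, while $c_0(\hat G)$ acts on the left (resp.\ right) through the range projections $\pi_\rho(q_{\pi'})$ (resp.\ source projections $q_\pi$). Choosing an orthonormal basis of each block exhibits $p\,\Ee(\rho)\,p$ as the graph correspondence $X(E)$ of the directed graph $E$ with vertex set $\hat G$ and exactly $d_\pi m_{\pi'}$ edges from $\pi$ to $\pi'$; hence $\Oo_{p\,\Ee(\rho)\,p}\cong C^*(E)$ and the first assertion follows. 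In this description a vertex $\pi'$ receives no edge iff $\pi_\rho(q_{\pi'})=0$ iff $m_{\pi'}=0$, so $E$ has no sources exactly when every $m_{\pi'}\ge 1$, i.e.\ when $\pi_\rho$ is injective, giving the second assertion. For the third, note that $\Ee(\rho_1\oplus\rho_2)\cong\Ee(\rho_1)\oplus\Ee(\rho_2)$ as correspondences over $C^*(G)$ and that compression commutes with direct sums; thus $E$ is the edge-disjoint union over the common vertex set $\hat G$, and its incidence matrix $A_E(\pi,\pi')=d_\pi m_{\pi'}$ is additive in the multiplicities $m_{\pi'}=m_{\pi'}^{(1)}+m_{\pi'}^{(2)}$.

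The main obstacle I anticipate is the passage from the compression to a genuine strong Morita equivalence of the Cuntz--Pimsner algebras \emph{in Katsura's sense}: since $\pi_\rho$ need not be injective here, one must verify that the Morita equivalence of correspondences is compatible with Katsura's ideal $J_{\Ee}=\phi^{-1}(\Kk(\Ee))\cap(\ker\phi)^\perp$, so that it descends to $\Oo_{\Ee(\rho)}$ and not merely to the Toeplitz algebra. A secondary point requiring care is the case $m_\pi=\infty$, where the corresponding vertex is an infinite receiver and $E$ is not row-finite; one should confirm that the graph-algebra identification and the SME persist in that generality.
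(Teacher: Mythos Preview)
Your argument is correct in outline and reaches the same conclusion, but it follows a genuinely different route from the paper. The paper does not compress by minimal projections; instead it leaves the full central projections $p_i$ in place, decomposes $\Ee(\rho)=\bigoplus_{i,j}p_j\Ee(\rho)p_i$ as a family of $M_{n(j)}$--$M_{n(i)}$ correspondences, and then recognises this data as a \emph{graph of $C^*$-correspondences} in the sense of \cite{DKPS}. The strong Morita equivalence with a graph $C^*$-algebra is then imported wholesale from the results of \cite{MS}, \cite{KPQ}, \cite{DKPS}, which say that the Cuntz--Pimsner algebra of such a graph of correspondences is SME to the $C^*$-algebra of the subjacent discrete graph. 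Your approach, by contrast, produces the Morita equivalence explicitly via the full projection $p=\sum_\pi q_\pi$ and identifies the corner $p\,\Ee(\rho)\,p$ directly as an ordinary graph correspondence over $c_0(\hat G)$; the black box you invoke is the Morita invariance of $\Oo_\Ee$ rather than the graph-of-correspondences machinery. Both are legitimate; the paper's route is shorter because it outsources all the analytic work to the cited references, while your route is more self-contained and makes the graph completely explicit (indeed, the graph you obtain, with $d_\pi m_{\pi'}$ edges from $\pi$ to $\pi'$, is not literally the same graph the paper draws in its examples, though of course the two graph algebras must be SME). The two caveats you raise at the end---compatibility of the compression with Katsura's ideal when $\pi_\rho$ is not injective, and the possibility of infinite emitters when some $m_\pi=\infty$---are exactly the points that need checking in your approach, and both are handled by standard results in the literature; the paper sidesteps them by appealing to \cite{DKPS}.
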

 
\begin{proof} By the Peter-Weyl theorem, the group $C^*$-algebra $C^*(G)$ decomposes as a direct sum of matrix algebras $A_i=M_{n(i)}$ with units $p_i$, indexed by the discrete set $\hat{G}$ of equivalence classes of irreducible representations.

 Let $E$ be the graph with vertex space $E^0=\hat{G}$ and with edges joining the vertices $v_i$ and $v_j$ determined by the number of minimal components of the (non-zero) $A_j$-$A_i$ $C^*$-correspondences $p_j{\mathcal E}(\rho)p_i$. If $p_j{\mathcal E}(\rho)p_i=0$, there is no edge from $v_i$ to $v_j$. 
 
 It follows  that ${\mathcal O}_{{\mathcal E}(\rho)}$ is isomorphic to the $C^*$-algebra of the graph of $C^*$-correspondences in which we assign the algebra $A_i$ at the vertex $v_i$ and the minimal components of $p_j{\mathcal E}(\rho)p_i\neq 0$ for each edge joining $v_i$ with $v_j$. By construction, this $C^*$-algebra is SME to $C^*(E)$ (see \cite{MS} and \cite{KPQ}). For more on graphs of $C^*$-correspondences, see \cite{DKPS}.
 
 When $\pi_\rho:C^*(G)\to{\mathcal L}({\mathcal H})$ is injective, it follows that $p_j{\mathcal E}(\rho)\neq 0$,  and therefore  $v_j$ is not a source for all $j$. For the last part, notice that ${\mathcal E}(\rho)\cong {\mathcal E}(\rho_1)\oplus {\mathcal E}(\rho_2)$.
\end{proof}

We recall the following result of Kumjian, see Theorem 3.1 in \cite{K}:

\begin{thm}\label{spi}  Let $A$ be a separable unital $C^*$-algebra and let ${\mathcal E}={\mathcal H}\otimes_{\mathbb C}A$ with left multiplication given by a faithful representation $\pi:A\to {\mathcal L}({\mathcal H})$ such that $\pi(A)\cap{\mathcal K}({\mathcal H})=\{0\}$.
 Then  ${\mathcal O}_{\mathcal E}\cong {\mathcal T}_{\mathcal E}$ is simple, purely infinite and  $KK$-equivalent to $A$.
\end{thm}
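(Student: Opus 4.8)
The plan is to derive this from the general theory of Cuntz--Pimsner algebras, with the infinite-dimensionality of $\mathcal{H}$ supplying everything that is special. I would first record the relevant structure of $\mathcal{E}=\mathcal{H}\otimes_{\mathbb C}A$. Since $\pi$ is faithful and $A\neq 0$, the space $\mathcal{H}$ must be infinite-dimensional, for otherwise $\mathcal{K}(\mathcal{H})=\mathcal{L}(\mathcal{H})\supseteq\pi(A)$ would force $\pi=0$. Viewed as a right Hilbert $A$-module, $\mathcal{E}$ is the standard module, so $\mathcal{K}(\mathcal{E})\cong\mathcal{K}(\mathcal{H})\otimes A$ while the left action is $\phi(a)=\pi(a)\otimes 1_A$. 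Slicing with a state $\omega$ on $A$ normalized by $\omega(1_A)=1$ shows $\phi(a)\in\mathcal{K}(\mathcal{E})$ iff $\pi(a)\in\mathcal{K}(\mathcal{H})$, which by hypothesis holds iff $\pi(a)=0$, i.e.\ iff $a=0$. Thus $\phi$ is injective and $\phi^{-1}(\mathcal{K}(\mathcal{E}))=\{0\}$, so Katsura's ideal $J_{\mathcal{E}}=\phi^{-1}(\mathcal{K}(\mathcal{E}))\cap(\ker\phi)^{\perp}$ vanishes; since the covariance relation is imposed only on $J_{\mathcal{E}}$, this gives $\mathcal{O}_{\mathcal{E}}\cong\mathcal{T}_{\mathcal{E}}$ at once. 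The $KK$-equivalence is then immediate from Pimsner's theorem that the canonical inclusion $A\hookrightarrow\mathcal{T}_{\mathcal{E}}$ is always a $KK$-equivalence.

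For simplicity I would use the criterion (Schweizer, refined by Katsura) that $\mathcal{O}_{\mathcal{E}}$ is simple exactly when $\mathcal{E}$ is minimal and aperiodic. Minimality holds essentially for free from unitality and the tensor structure: if $0\neq I\trianglelefteq A$, then for $b\in I$ and $\xi\in\mathcal{H}$ the element $\phi(b)(\xi\otimes 1_A)=\pi(b)\xi\otimes 1_A$ lies in $\overline{\mathcal{E}I}=\mathcal{H}\otimes I$ only if $1_A\in I$, so no proper nonzero ideal is positively invariant; and because $\phi^{-1}(\mathcal{K}(\mathcal{E}))=0$ the remaining invariance requirements are vacuous. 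Aperiodicity I expect to extract from $\dim\mathcal{H}=\infty$, which rules out any periodicity of the shift on the Fock module. It is worth keeping the model case $A=\mathbb{C}$ in mind, where $\mathcal{E}=\mathcal{H}$ gives $\mathcal{O}_{\mathcal{E}}\cong\mathcal{O}_\infty$: this is exactly the feature (infinite multiplicity of the shift) that separates the present situation from the classical, non-simple Toeplitz algebra with $\dim\mathcal{H}=1$.

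The last and hardest step is pure infiniteness, and this is where I expect the main obstacle, since $A$ is an arbitrary separable unital $C^*$-algebra about whose interior we may assume nothing. The engine is an explicit Cuntz family: fixing an orthonormal sequence $(e_i)$ in $\mathcal{H}$ and writing $S_i=t(e_i\otimes 1_A)$ for the universal Toeplitz representation $(\pi_0,t)$, the inner products are scalar, so $S_i^*S_j=\pi_0(\langle e_i,e_j\rangle 1_A)=\delta_{ij}\,1$. Hence the $S_i$ are isometries with mutually orthogonal ranges generating a unital copy of $\mathcal{O}_\infty$, and in particular the unit $1=\pi_0(1_A)$ is properly infinite. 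I would then combine this with simplicity by passing through the gauge action and its faithful conditional expectation $E$ onto the core $\varinjlim_n\mathcal{K}(\bigoplus_{k\le n}\mathcal{E}^{\otimes k})$, and showing that every nonzero hereditary subalgebra can be compressed, using words in the $S_i$, onto a corner carrying an infinite projection; by simplicity this forces $\mathcal{O}_{\mathcal{E}}$ to be purely infinite. The delicate point is performing this compression uniformly, with no handle on $A$; the saving feature is precisely that the inner products $\langle e_i,e_j\rangle\in\mathbb{C}$ are scalar, so the embedded $\mathcal{O}_\infty$ is decoupled from $\pi_0(A)$ and is available inside every corner.
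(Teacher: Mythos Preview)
The paper does not prove this theorem: it is stated as a quotation of Kumjian's Theorem~3.1 in \cite{K}, with no argument supplied. So there is no proof in the present paper to compare your proposal against.

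On its own merits, your outline is sound where it is concrete and honestly flagged where it is not. The argument that $\phi^{-1}(\mathcal K(\mathcal E))=0$, hence $J_{\mathcal E}=0$ and $\mathcal O_{\mathcal E}\cong\mathcal T_{\mathcal E}$, is correct, and the $KK$-equivalence then follows from Pimsner's theorem. Your minimality check is also right: because all inner products land in $\mathbb C\cdot 1_A$, any positively invariant proper ideal is forced to be zero. What remains genuinely open in your sketch are aperiodicity (you only say you ``expect to extract'' it from $\dim\mathcal H=\infty$; one still has to argue, e.g., that $\mathcal E^{\otimes n}\cong\mathcal H^{\otimes n}\otimes A$ is never isomorphic to the identity bimodule) and the compression step for pure infiniteness. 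The strategy you describe---using the embedded copy of $\mathcal O_\infty$ generated by $S_i=t(e_i\otimes 1_A)$ together with the gauge expectation to compress an arbitrary nonzero positive element onto something invertible---is essentially the route Kumjian takes in \cite{K}; but the ``delicate point'' you yourself identify, carrying out that compression uniformly with no hypothesis on $A$ beyond separability and unitality, is exactly the substance of his argument, and your proposal does not yet supply it.
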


\begin{cor} Let $G$ be infinite, discrete and amenable and let $\lambda:G\to U(\ell^2(G))$ be the left regular representation. Then ${\mathcal O}_{\mathcal E(\lambda)}$ is simple and purely infinite, $KK$-equivalent to $C^*(G)$.

\end{cor}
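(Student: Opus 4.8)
The plan is to obtain the corollary as a direct application of Theorem~\ref{spi} to $A=C^*(G)$ and the integrated representation $\pi=\pi_\lambda:C^*(G)\to\mathcal{L}(\ell^2(G))$. Indeed, by the defining formulas the correspondence $\mathcal{E}(\lambda)=\ell^2(G)\otimes_{\mathbb C}C^*(G)$ has left multiplication $a\cdot(\xi\otimes b)=\pi_\lambda(a)\xi\otimes b$, so it is precisely of the form to which Theorem~\ref{spi} applies, with $\mathcal{H}=\ell^2(G)$. The entire argument therefore reduces to verifying the three hypotheses of that theorem: that $A$ is separable and unital, that $\pi_\lambda$ is faithful, and that $\pi_\lambda(C^*(G))\cap\mathcal{K}(\ell^2(G))=\{0\}$. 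Once these hold, simplicity, pure infiniteness, and $KK$-equivalence to $C^*(G)$ are immediate.

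The first two hypotheses are standard. Since $G$ is discrete, $C^*(G)$ is unital with unit $\delta_e$, and since $G$ is countable (being second countable and discrete), $\ell^1(G)$ is separable and dense in $C^*(G)$, so $C^*(G)$ is separable. Faithfulness of $\pi_\lambda$ is exactly the assertion that the canonical surjection $C^*(G)\to C^*_r(G)$ is an isomorphism, which holds precisely because $G$ is amenable; this is the fact already recalled in the introduction.

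The substantive point is the third hypothesis, and here I would exploit the translation invariance of the regular representation. For $a\in C^*_r(G)$ write its Fourier coefficients $a_g=\langle a\delta_e,\delta_g\rangle$. A computation on $\lambda(g)$, extended by linearity and continuity, shows that in the orthonormal basis $\{\delta_h\}_{h\in G}$ the matrix of $\pi_\lambda(a)$ is constant along the diagonals $\{(l,k):lk^{-1}=g\}$, with value $a_g$; that is, $\langle\pi_\lambda(a)\delta_k,\delta_l\rangle=a_{lk^{-1}}$. Consequently $\|\pi_\lambda(a)\delta_k\|^2=\sum_l|a_{lk^{-1}}|^2=\sum_g|a_g|^2=\|a\delta_e\|^2$ is independent of $k$. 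Since $G$ is infinite I may enumerate it as a sequence $(k_n)$ so that $\delta_{k_n}\to 0$ weakly in $\ell^2(G)$; if $\pi_\lambda(a)$ were compact it would carry this weakly null sequence to a norm-null sequence, forcing $\|a\delta_e\|=0$ and hence $a=0$. This proves $\pi_\lambda(C^*(G))\cap\mathcal{K}(\ell^2(G))=\{0\}$.

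With all three hypotheses verified, Theorem~\ref{spi} applies and yields at once that $\mathcal{O}_{\mathcal{E}(\lambda)}$ is simple, purely infinite, and $KK$-equivalent to $C^*(G)$. I expect the only genuinely nontrivial step to be the compactness argument above; the separability, unitality, and faithfulness assertions are routine consequences of $G$ being a countable discrete amenable group.
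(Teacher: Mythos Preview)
Your proposal is correct and follows exactly the paper's approach: verify the hypotheses of Theorem~\ref{spi} for $A=C^*(G)$ and $\pi=\pi_\lambda$, then invoke that theorem. The paper simply asserts the three hypotheses (unitality from discreteness, faithfulness from amenability, trivial intersection with compacts from infiniteness of $G$) without further argument, whereas you supply an explicit proof of the compactness step via translation invariance; this is a welcome elaboration but not a different route.
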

 \begin{proof} Since $G$ is discrete, $C^*(G)$ is unital. Since $G$ is amenable, it is known that the representation $\pi_\lambda:C^*(G)\to {\mathcal L}(\ell^2(G))$ induced by $\lambda$ is faithful (see for example Theorem A.18 in \cite{W}). Since $G$ is infinite, we also have \[\pi_\lambda(C^*(G))\cap{\mathcal K}(\ell^2(G))=\{0\},\]
 and we can apply the above theorem.
  \end{proof}
 Note that the same conclusion holds true for any representation $\rho$ of an infinite discrete group $G$ such that $\pi_\rho$ is faithful.

\bigskip
 
 \section{Examples}
 
 \bigskip

\begin{example} Let $S_3=\{(1),(12), (13),(23), (123), (132)\}$ be the symmetric group. Then $\hat{S_3}=\{\iota,\ve,\sigma\}$ where \[\iota:S_3\to U(\CC),\; \iota(g)=1\] is the trivial representation, \[\ve:S_3\to U(\CC), \; \ve(12)=-1,\; \ve(123)=1\] is the signature representation, and \[\sigma:S_3\to U(\CC^2),\;
\sigma(12)=\left[\begin{array}{rr}-1&-1\\0&1\end{array}\right],\;\;\sigma(123)=\left[\begin{array}{rr}-1&-1\\1&0\end{array}\right]\]
is the (standard) irreducible two-dimensional representation.
These representations  have characters \[\chi_\iota(g)=1,\; \chi_\ve(1)=1,\; \chi_\ve(12)=-1,\; \chi_\ve(123)=1\] and \[\chi_\sigma(1)=2, \; \chi_\sigma(12)=0,\; \chi_\sigma(123)=-1.\]

The group algebra $C^*(S_3)$ is isomorphic to $\CC\oplus\CC\oplus M_2(\CC)$ with unit $p_1\oplus p_2\oplus p_3=\ds(1/6)\chi_\iota\oplus(1/6)\chi_\ve\oplus(1/3)\chi_\sigma$. 

\medskip

Any representation $\rho:S_3\to U(\Hh)$ extends  to a representation \[\pi_\rho:C^*(S_3)\to {\mathcal L}({\Hh}), \pi_\rho(\sum a_g\delta_g)=\sum a_g\rho(g),\]  
where for $g\in S_3,$ $a_g\in \CC$, $\delta_g(h)=1$ for $h=g$ and $\delta_g(h)=0$ otherwise.

Since $\rho$ decomposes as a direct sum of irreducibles (with multiplicities), we first list  the graphs associated with the representations $\iota, \ve$ and $\sigma$. 

Since $\pi_\iota(p_1)=1, \pi_\iota(p_2)=\pi_\iota(p_3)=0$ we have 
${\mathcal E}(\iota)={\CC}\otimes C^*(S_3)\cong \CC\oplus \CC\oplus M_2(\CC)$
which gives the graph of $C^*$-correspondences

\[
\begin{tikzpicture}[shorten >=0.4pt,>=stealth, semithick]
\renewcommand{\ss}{\scriptstyle}
\node[inner sep=1.0pt, circle, fill=black]  (u) at (-2,0) {};
\node[below] at (u.south)  {$\ss \CC$};
\node[inner sep=1.0pt, circle, fill=black]  (v) at (0,0) {};
\node[below] at (v.south)  {$\ss \CC$};
\node[inner sep=1.0pt, circle, fill=black]  (w) at (2,0) {};
\node[below] at (w.south)  {$\ss M_2$};
\draw[->] (u) ..controls (-1,1.5) and (-3,1.5).. node[above,black] {$\ss \CC$} (u);

\draw[->] (w) to [out=155, in=25] node[above,black] {$\ss M_2$}  (u);
\draw[->] (v) to [out=-155, in=-25] node[below,black] {$\ss \CC$} (u);

\end{tikzpicture}
\]
Similarly, $\pi_{\ve}(p_2)=1, \pi_\ve(p_1)=\pi_\ve(p_3)=0$ and ${\mathcal E}(\ve)={\CC}\otimes C^*(S_3)\cong \CC\oplus \CC\oplus M_2(\CC)$ gives 

\[
\begin{tikzpicture}[shorten >=0.4pt,>=stealth, semithick]
\renewcommand{\ss}{\scriptstyle}
\node[inner sep=1.0pt, circle, fill=black]  (u) at (-2,0) {};
\node[below] at (u.south)  {$\ss \CC$};
\node[inner sep=1.0pt, circle, fill=black]  (v) at (0,0) {};
\node[below] at (v.south)  {$\ss \CC$};
\node[inner sep=1.0pt, circle, fill=black]  (w) at (2,0) {};
\node[below] at (w.south)  {$\ss M_2$};
\draw[->] (v) ..controls (-1,1.5) and (1,1.5).. node[above,black] {$\ss \CC$} (v);

\draw[->] (w) to  node[above,black] {$\ss M_2$}  (v);
\draw[->] (u) to  node[above,black] {$\ss \CC$} (v);

\end{tikzpicture}
\]
Since $\pi_\sigma(p_3)=\left[\begin{array}{cc}1&0\\0&1\end{array}\right]$ and $\pi_\sigma(p_1)=\pi_\sigma(p_2)=\left[\begin{array}{cc}0&0\\0&0\end{array}\right]$, it follows that ${\mathcal E}(\sigma)={\CC}^2\otimes C^*(S_3)\cong \CC^2\oplus \CC^2\oplus \CC^2\otimes M_2(\CC)$ determines 

\[
\begin{tikzpicture}[shorten >=0.4pt,>=stealth, semithick]
\renewcommand{\ss}{\scriptstyle}
\node[inner sep=1.0pt, circle, fill=black]  (u) at (-2,0) {};
\node[below] at (u.south)  {$\ss \CC$};
\node[inner sep=1.0pt, circle, fill=black]  (v) at (0,0) {};
\node[below] at (v.south)  {$\ss \CC$};
\node[inner sep=1.0pt, circle, fill=black]  (w) at (2,0) {};
\node[right] at (w.south)  {$\ss M_2$};
\draw[->] (w) ..controls (1,1.5) and (3,1.5).. node[above,black] {$\ss M_2$} (w);
\draw[->] (w) ..controls (1,-1.5) and (3,-1.5).. node[below,black] {$\ss M_2$} (w);

\draw[->] (u) to [out=25, in=155] node[above,black] {$\ss \CC^2$}  (w);
\draw[->] (v) to [out=-25, in=-155] node[below,black] {$\ss \CC^2$} (w);
\end{tikzpicture}
\]

\end{example}

\begin{example}\label {perm}

 For the permutation representation $\rho:S_3\to U(\CC^3)$ we have $\pi_\rho\cong \pi_\iota\oplus\pi_\sigma$ and 
\[\pi_\rho(p_1)=\left[\begin{array}{ccc}1&0&0\\0&0&0\\0&0&0\end{array}\right], \;\; \pi_\rho(p_2)=0, \;\; \pi_\rho(p_3)=\left[\begin{array}{ccc}0&0&0\\0&1&0\\0&0&1\end{array}\right].\] 
The $C^*$-correspondence ${\mathcal E}(\rho)=\CC^3\otimes C^*(S_3)\cong \CC^3\oplus\CC^3\oplus \CC^3\otimes M_2$ decomposes as
\[{\mathcal E}(\rho)=p_1{\mathcal E}(\rho)p_1\oplus p_3{\mathcal E}(\rho)p_1\oplus p_1{\mathcal E}(\rho)p_2\oplus p_3{\mathcal E}(\rho)p_2\oplus p_1{\mathcal E}(\rho)p_3\oplus p_3{\mathcal E}(\rho)p_3\cong \]
\[\cong \CC\oplus\CC^2\oplus \CC\oplus\CC^2\oplus M_2\oplus \CC^2\otimes M_2.\]
Counting dimensions, we obtain the following graph of $C^*$-correspondences 

\[
\begin{tikzpicture}[shorten >=0.4pt,>=stealth, semithick]
\renewcommand{\ss}{\scriptstyle}
\node[inner sep=1.0pt, circle, fill=black]  (u) at (-2,0) {};
\node[below] at (u.south)  {$\ss \CC$};
\node[inner sep=1.0pt, circle, fill=black]  (v) at (0,0) {};
\node[below] at (v.south)  {$\ss \CC$};
\node[inner sep=1.0pt, circle, fill=black]  (w) at (2,0) {};
\node[right] at (w.east)  {$\ss M_2$};
\draw[->] (w) ..controls (1,1.5) and (3,1.5).. node[above,black] {$\ss M_2$} (w);
\draw[->] (w) ..controls (1,-1.5) and (3,-1.5).. node[below,black] {$\ss M_2$} (w);

\draw[->] (u) to [out=-35, in=-145] node[below,black] {$\ss \CC^2$}  (w);
\draw[->] (v) to node[above,black] {$\ss \CC^2$} (w);
\draw[->] (u) ..controls (-1,1.5) and (-3,1.5).. node[above,black] {$\ss \CC$} (u);

\draw[->] (w) to [out=145, in=35] node[above,black] {$\ss M_2$}  (u);
\draw[->] (v) to  node[above,black] {$\ss \CC$} (u);
\end{tikzpicture}
\]
The subjacent graph $E$ has incidence matrix \[B_\rho=B_\iota+B_\sigma=\left[\begin{array}{ccc}1&1&1\\0&0&0\\1&1&2\end{array}\right],\] 
where the entry $B_\rho(v,w)$ counts the edges from $w$ to $v$.
It follows that  ${\mathcal O}_{\mathcal E(\rho)}$ is SME to the  graph algebra $C^*(E)$.

Using Theorem 3.2 in \cite{RS}  we obtain
\[K_0(C^*(E))\cong\text{ coker }D\cong \ZZ, \;\; K_1(C^*(E))\cong \ker D\cong 0,\]
where \[D=\left[\begin{array}{rr}0&-1\\-1&0\\-1&-2\end{array}\right].\]
Compared with \cite{RS}, note that we reverse the direction of the edges, so sinks   become sources. 

\end{example}
\begin{rmk}
In all the above  cases the graphs have sources since the extension of the given representation to $C^*(S_3)$ is not one-to-one. 

If we consider a representation $\rho$ of $S_3$ which contains each of $\iota,\ve$ and $\sigma$, for example $\sigma\otimes \sigma=\iota\oplus\ve\oplus \sigma$ or the left regular representation $\lambda=\iota\oplus\ve\oplus 2\sigma$, then $\pi_\rho$ will be injective and the graph associated to ${\mathcal E}(\rho)$ will have no sources. Its incidence matrix is obtained by adding the incidence matrices corresponding to $\iota, \ve$ and $\sigma,$ counting multiplicities. 

For $\rho=\sigma\otimes \sigma$ we get the following graph of $C^*$-correspondences 
\[
\begin{tikzpicture}[shorten >=0.4pt,>=stealth, semithick]
\renewcommand{\ss}{\scriptstyle}
\node[inner sep=1.0pt, circle, fill=black]  (u) at (-2,0) {};
\node[below] at (u.south)  {$\ss \CC$};
\node[inner sep=1.0pt, circle, fill=black]  (v) at (0,0) {};
\node[below] at (v.south)  {$\ss \CC$};
\node[inner sep=1.0pt, circle, fill=black]  (w) at (2,0) {};
\node[right] at (w.east)  {$\ss M_2$};
\draw[->] (w) ..controls (2.5,2) and (4,1).. node[above,black] {$\ss M_2$} (w);
\draw[->] (w) ..controls (2.5,-2) and (4,-1).. node[below,black] {$\ss M_2$} (w);

\draw[->] (u) to [out=-75, in=-105] node[below,black] {$\ss \CC^2$}  (w);
\draw[->] (u) ..controls (-1,1.5) and (-3,1.5).. node[above,black] {$\ss \CC$} (u);
\draw[->] (v) ..controls (-1,1.5) and (1,1.5).. node[above,black] {$\ss \CC$} (v);

\draw[->] (u) ..controls (-1,1.5) and (-3,1.5).. node[above,black] {$\ss \CC$} (u);
\draw[->] (w) to  node[below,black] {$\ss M_2$}  (v);
\draw[->] (u) to  node[below,black] {$\ss \CC$} (v);
\draw[->] (v) to [out=25, in=155] node[above,black] {$\ss \CC^2$} (w);
\draw[->] (w) to [out=215, in=-35] node[below,black] {$\ss M_2$}  (u);
\draw[->] (v) to [out=155, in=25] node[above,black] {$\ss \CC$} (u);
\end{tikzpicture}
\]
with incidence matrix
\[B_{\sigma\otimes\sigma}=B_\iota+B_\ve+B_\sigma=\left[\begin{array}{ccc}1&1&1\\1&1&1\\1&1&2\end{array}\right].\] 

\end{rmk}

\begin{example} Any representation $\rho$ of a cyclic group $G$  is determined by a unitary $\rho(1)\in U({\mathcal H})$
and decomposes as a direct sum or a direct integral of characters.
Recall that this is just a restatement of the spectral theorem for unitary operators.

If $G={\mathbb Z}/n{\mathbb Z}$, then $\hat{G}=\{\chi_1,...,\chi_n\}$ and ${\mathcal E}(\rho)$ determines a graph with $n$ vertices where the incidence matrix $[a_{ji}]$ is such that $a_{ji}=\dim\chi_j{\mathcal H}\chi_i$. 

For $G={\mathbb Z}$, let's assume that ${\mathcal H}=L^2(X,\mu)$ for a measure space $(X,\mu)$ and that $\rho(1)=M_{\varphi}$, the multiplication operator with a  function $\varphi: X\to \TT$.

Then ${\mathcal E}(\rho) =L^2(X,\mu)\otimes C^*(\ZZ)\cong C(\TT, L^2(X,\mu))$ becomes a $C^*$-correspondence over $C^*(\ZZ)\cong C(\TT)$ with operations
\[\la \xi,\eta\ra(k)=\la \xi (k), \eta (k)\ra_{L^2(X,\mu)},\;\;\text{for}\;\;  \xi, \eta\in C_c(\ZZ, L^2(X,\mu))\]
and
\[(\xi\cdot f)(k)=\sum_k\xi(k)f(k),\;\;(f\cdot\xi)(k)=\sum_k f(k) \varphi^k\xi(k),\]\[ \;\;\text{for}\;\; f\in C_c(\ZZ), \xi\in C_c(\ZZ, L^2(X,\mu)).\]

\bigskip

If $\dim{\mathcal H}=n$ is finite, then the function $\varphi$ is given by  $(z_1,...,z_n)\in \TT^n$ and ${\mathcal E}(\rho)$ is isomorphic to the $C^*$-correspondence  of the  topological graph with vertex space $E^0={\mathbb T}$, edge space $E^1=\TT\times\{1,2,...,n\}$, source map $s:E^1\to E^0, s(z,k)=z$ and range map $r:E^1\to E^0, r(z,k)=z_kz$.

If $\lambda$ is the left regular representation of $G={\mathbb Z}$ on $\ell^2(\ZZ)$, it follows from Theorem \ref{spi} that ${\mathcal O}_{\mathcal E(\lambda)}$ is simple and purely infinite with the same $K$-theory as $C({\mathbb T})$.

\end{example}

\begin{example}
Let $G=\RR$ and let $\mu$ be the (normalized) Lebesgue measure $\mu$ on $\RR$. Consider the representation \[\rho:\RR\to U(L^2(\RR, \mu)), (\rho(t)\xi)(s)=e^{its}\xi(s)\] which extends to the Fourier transform $\pi_\rho$ on $C^*(\RR)\cong C_0(\RR)$, where 
\[(\pi_\rho(f)\xi)(s)=\int_\RR f(t)e^{its}\xi(s) d\mu(t),\;\text{ for}\; f\in L^1(\RR, \mu),\xi\in L^2(\RR, \mu).\] It is known that $\rho$ is equivalent to the right regular representation of $\RR$ (see Theorem 2.2 on page 117 in \cite{Su})
and that $\rho$ is a direct integral of characters $\chi_t$, where $\chi_t(s)=e^{its}$. 

Then ${\mathcal E}(\rho)=L^2(\RR, \mu)\otimes C^*(\RR)\cong C_0(\RR, L^2(\RR, \mu))$ becomes a $C^*$-correspondence over $C^*(\RR)\cong C_0(\RR)$ such that the left multiplication is injective and $\pi_\rho(C_0(\RR))\cap {\mathcal K}(L^2(\RR, \mu))=\{0\}$.
It follows that ${\mathcal O}_{\mathcal E(\rho)}$ has the $K$-theory of $C_0(\RR)$, but since $C_0(\RR)$ is not unital, we cannot apply Theorem \ref{spi} to conclude that this algebra is simple or purely infinite.
\end{example}

\section{The crossed product $C^*$-correspondence}

\bigskip

We want to compare our $C^*$-correspondence ${\mathcal E}(\rho)$ associated to a group representation with another $C^*$-correspondence from the literature.
Let $G$ be a locally compact group and let $\rho: G\to U({\mathcal H})$ be a representation   with $\dim{\mathcal H}=n$ for $n\in {\mathbb N}\cup\{\infty\}$. In \cite{De} we studied the crossed-product ${\mathcal O}_n\rtimes G$, using
the crossed product $C^*$-correspondence ${\mathcal D(\rho)}$ over $C^*(G)$, constructed also from ${\mathcal H}\otimes_{\CC} C^*(G)$ with the same inner product and right multiplication as ${\mathcal E}(\rho)$
\[\la \xi, \eta\ra(t)=\int_G \la\xi(s),\eta(st)\ra ds,\]
\[(\xi\cdot f)(t)=\int_G\xi(s)f(s^{-1}t)ds,\]
but with a different left multiplication
\[(h\cdot \xi)(t)=\int_Gh(s)\rho(s)\xi(s^{-1}t)ds\]
for $\xi, \eta\in C_c(G,\mathcal H)$ and $ f,h\in C_c(G)$.
This left multiplication is  always one-to-one, which changes the structure of the Cuntz-Pimsner algebra ${\mathcal O}_{\mathcal D(\rho)}$.
We recall the following  result (see \cite{HN}):

\begin{thm} The representation $\rho$ determines a quasi-free action of $G$ on the Cuntz algebra ${\mathcal O}_n$ and  ${\mathcal O}_{\mathcal D(\rho)}\cong {\mathcal O}_n\rtimes_\rho G$.

\end{thm}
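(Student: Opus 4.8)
The plan is to realize ${\mathcal O}_n$ as the Cuntz--Pimsner algebra of a correspondence over $\CC$, transport $\rho$ to a genuine automorphic action on ${\mathcal O}_n$, and then recognize ${\mathcal D}(\rho)$ as the associated crossed product correspondence, so that the asserted isomorphism becomes an instance of the Hao--Ng theorem. First I would recall that ${\mathcal O}_n\cong {\mathcal O}_{\mathcal H}$, where the $n$-dimensional space ${\mathcal H}$ is viewed as a $C^*$-correspondence over $\CC$ with its Hilbert space inner product, right action by scalar multiplication, and left action the unital embedding $\CC\hookrightarrow{\mathcal L}({\mathcal H})$, $a\mapsto aI$; under the universal covariant representation the images $S_\xi$ of $\xi\in{\mathcal H}$ are the canonical generators, so that an orthonormal basis of ${\mathcal H}$ yields the generating isometries of ${\mathcal O}_n$.

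Next I would define the quasi-free action $\alpha\colon G\to \operatorname{Aut}({\mathcal O}_n)$ by letting $G$ act on the generating subspace ${\mathcal H}\subset {\mathcal O}_n$ through $\rho$, that is $\alpha_g(S_{\xi})=S_{\rho(g)\xi}$ for $\xi\in{\mathcal H}$, and extending via the universal property of ${\mathcal O}_{\mathcal H}$. Since each $\rho(g)$ is unitary, $\alpha_g$ preserves the defining relations and is invertible with inverse $\alpha_{g^{-1}}$, and the continuity of $g\mapsto\rho(g)\xi$ makes $\alpha$ strongly continuous. The structural point is that $\alpha$ is precisely the action induced functorially on ${\mathcal O}_{\mathcal H}$ by the compatible pair consisting of the trivial action of $G$ on the coefficient algebra $\CC$ and the action $\rho$ of $G$ on ${\mathcal H}$; this compatibility is exactly what allows one to form the crossed product correspondence.

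I would then identify ${\mathcal D}(\rho)$ with the crossed product correspondence ${\mathcal H}\rtimes_\rho G$ over $\CC\rtimes G=C^*(G)$. Starting from $C_c(G,{\mathcal H})$ and specializing the standard formulas for the crossed product correspondence to the trivial action on the coefficient algebra $\CC$ (so that every twist by $\alpha_s$ on coefficients disappears) and to $\rho$ on ${\mathcal H}$, the inner product becomes $\la\xi,\eta\ra(t)=\int_G\la\xi(s),\eta(st)\ra\,ds$, the right action becomes $(\xi\cdot f)(t)=\int_G\xi(s)f(s^{-1}t)\,ds$, and the left action becomes $(h\cdot\xi)(t)=\int_G h(s)\rho(s)\xi(s^{-1}t)\,ds$. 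These are exactly the operations defining ${\mathcal D}(\rho)$, so after completion ${\mathcal D}(\rho)\cong {\mathcal H}\rtimes_\rho G$ as correspondences over $C^*(G)$.

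Finally I would invoke the Hao--Ng theorem \cite{HN}, which for the compatible action above yields a natural isomorphism ${\mathcal O}_{\mathcal H}\rtimes_\alpha G\cong {\mathcal O}_{{\mathcal H}\rtimes_\rho G}$; combined with the two identifications this reads ${\mathcal O}_n\rtimes_\rho G\cong {\mathcal O}_{{\mathcal D}(\rho)}$. I expect this last step to be the main obstacle: one must confirm that the hypotheses guaranteeing the Hao--Ng isomorphism are in force (the relevant amenability assumption on $G$ or on the action, together with the compatibility of Katsura's ideal $J_{\mathcal H}$ with passage to the crossed product), and one must check that the canonical map intertwines the gauge actions, so that faithfulness can be controlled, for instance through a gauge-invariant uniqueness argument on ${\mathcal O}_{{\mathcal D}(\rho)}$.
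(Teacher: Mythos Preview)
The paper does not supply its own proof of this theorem: it is stated as a recalled result with a bare citation to \cite{HN}. Your proposal correctly reconstructs the argument behind that citation---realize ${\mathcal O}_n$ as ${\mathcal O}_{\mathcal H}$, let $\rho$ act on the generating copy of ${\mathcal H}$ to obtain the quasi-free action, identify ${\mathcal D}(\rho)$ with the crossed product correspondence ${\mathcal H}\rtimes_\rho G$ over $\CC\rtimes G=C^*(G)$, and apply Hao--Ng---so your approach is exactly in line with what the paper invokes, only spelled out in more detail than the paper itself provides.

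Your caveat about the hypotheses of \cite{HN} is well placed: the Hao--Ng isomorphism in that reference is proved for amenable group actions, and the paper's statement of the theorem does not make this explicit. In the present situation the coefficient algebra is $\CC$ and Katsura's ideal is all of $\CC$, so the compatibility of $J_{\mathcal H}$ with the crossed product is automatic; the only genuine hypothesis to track is amenability of $G$ (or of the action), which you should flag as an implicit assumption inherited from \cite{HN}.
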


\begin{example} If $G=S_3$ and $\rho: S_3\to U({\mathbb C}^3)$ is the permutation representation, then ${\mathcal D}(\rho)={\mathbb C}^3\otimes C^*(S_3)$ with the above operations becomes a $C^*$-correspondence over $C^*(S_3)$ different from ${\mathcal E}(\rho)$ discussed in Example \ref{perm}. It
decomposes as ${\mathbb C}\oplus {\mathbb C}\oplus M_2\oplus M_2\oplus{\mathbb C}^2\oplus{\mathbb C}^2\oplus{\mathbb C}^2\oplus{\mathbb C}^2$, see Example 6.4 in  \cite{De} and it determines the following graph of  $C^*$-correspondences for ${\mathcal O}_3\rtimes_\rho S_3$:

\[
\begin{tikzpicture}[shorten >=0.2pt,>=stealth, semithick]
\renewcommand{\ss}{\scriptstyle}
\node[inner sep=1.0pt, circle, fill=black]  (u) at (-1.5,2) {};
\node[left] at (u)  {$\mathbb C$};
\node[inner sep=1.0pt, circle, fill=black]  (v) at (1.5,2) {};
\node[right] at (v.east)  {$\mathbb C$};
\node[inner sep=1.0pt, circle, fill=black]  (w) at (0,-1) {};
\node[below] at (w.south)  {$ M_2$};
\draw[->] (w) to [out=95, in=-35] (u) node at (-1.5, 0.5)  {${\mathbb C}^2$};
\draw[->] (u) to [out=-80, in=140] (w) node at (-0.6, 0.5) {${\mathbb C}^2$};
\draw[->] (w) to [out=85, in=215]  (v) node at (0.6,0.5) {${\mathbb C}^2$};
\draw[->] (v) to [out=-100, in=40]  (w) node at (1.5, 0.5) {${\mathbb C}^2$};
\draw[->] (u) ..controls (-3,4.5) and (0,4.5)..(u) node[pos=0.3, left]{$\mathbb C$};
\draw[->] (v) ..controls (3,4.5) and (0,4.5)..(v) node[pos=0.3, right]{$\mathbb C$} ;
\draw[->] (w) ..controls (-2.5, -0.5) and (-2.5, -3.5) ..  (w)node[pos=0.5,left] {$ M_2$};
\draw[->] (w) ..controls (2.5,-0.5) and (2.5, -3.5).. (w)node[pos=0.5,right] {$M_2$} ;
\end{tikzpicture}
\]

 The subjacent graph has no sources and the incidence matrix  is

\[\left[\begin{array}{ccc}1&0&1\\0&1&1\\1&1&2\end{array}\right]\]
\end{example}

\bigskip

\section{Quasi-free actions of abelian groups}

\bigskip

If $G$ is  compact and abelian, then any representation $\rho$ of dimension $n\ge 2$ (including $n=\infty$) decomposes into characters and determines a cocycle  $c:E_n^1\to \hat{G}$, where $E_n$ is the graph with one vertex and $n$ edges. Recall that $C^*(E_n)={\mathcal O}_n$.

It turns out that ${\mathcal O}_n\rtimes_\rho G$ is   isomorphic to $C^*(E_n(c))$, where  $E_n(c)$ is the skew product graph $(\hat{G},\hat{G}\times E_n^1, r,s)$ with
\[r(\chi,e)=\chi c(e), s(\chi,e)=\chi\]
for $\chi\in \hat{G}$ and $e\in E^1_n$.

\begin{example}
If $G={\mathbb T}$ and $\lambda:{\mathbb T}\to U(L^2({\mathbb T}))$ is the left regular representation, then the algebra ${\mathcal O}_\infty\rtimes_\lambda \TT$ is isomorphic to the graph algebra where the vertices are labeled by ${\mathbb Z}$ and the incidence matrix has each entry equal to $1$.

\end{example}

\begin{example}
If $G={\RR}$ and $\rho :\RR\to U({\CC}^n)$ is a representation for $n\ge 2$, Kishimoto and Kumjian (see \cite{KK}) showed that ${\mathcal O}_n\rtimes_\rho {\mathbb R}$ is simple and purely infinite if the  characters in the decomposition of $\rho$ generate ${\mathbb R}$ as a closed semigroup. 

In \cite{Ka03} Katsura determined the ideal structure of the crossed products ${\mathcal O}_n\rtimes_\rho G$ where $G$ is a locally compact second countable abelian group. The action  of $G$ is determined by an $n$-dimensional representation $\rho$ with characters $\{\omega _1,...,\omega _n\}$ such that \[\rho_t(S_i)=\omega_i(t)S_i\] for $t\in G$, where $S_i$ are the generators of ${\mathcal O}_n$ for $ i=1,...,n$. It is shown in \cite{Ka03} that ${\mathcal O}_n\rtimes_\rho G\cong {\mathcal O}_{\mathcal D}$ where ${\mathcal D}$ is the  $C^*$-correspondence obtained from ${\CC}^n\otimes C^*(G)$ with the obvious right multiplication and inner product, and with left multiplication  given by 
\[f\cdot(f_1,f_2,...,f_n)=(\sigma_{\omega_1}(f)f_1,\sigma_{\omega_2}(f)f_2,...,\sigma_{\omega_n}(f)f_n),\]
for $f,f_1,...,f_n\in C_0(\hat{G})$, where $(\sigma_\omega f)(\gamma)=f(\gamma+\omega)$.

Note that in this case our $C^*$-correspondence ${\mathcal E}(\rho)$   determines a different Cuntz-Pimsner algebra, since the left multiplication is not  injective. The ideal structure of the algebras ${\mathcal O}_{\mathcal E(\rho)}$ will be considered in future work.
 
\end{example}

\bigskip


\begin{thebibliography}{0000}

\bigskip





\bibitem{BKP} D.V. Benson, A. Kumjian, N.C. Phillips, {\em Symmetries of Kirchberg algebras},
Canad. Math. Bull. 46 (2003), no. 4, 509--528. 



\bibitem{De} V. Deaconu, {\em Group actions on graphs and $C^*$-correspondences}, to appear Houston J. Math.

\bibitem{DKPS}  V. Deaconu, A. Kumjian, D. Pask, A. Sims, {\em Graphs of $C^*$-correspondences and Fell bundles},  Indiana Univ. Math. J. 59 (2010), no. 5, 1687--1735.

 
  \bibitem{HN}  G.~Hao, C.-K. Ng, \emph{Crossed products of C*-correspondences by
  amenable group actions}, J. Math. Anal. Appl. 345 (2008), no.~2,
 702--707.
    
  
  
  \bibitem{KLQ} S. Kaliszewski, N.S. Larsen, J. Quigg, {\em Subgroup correspondences}, arXiv: 1612.04243v1.
    
\bibitem{KPQ} S. Kaliszewski, N. Patani, J.Quigg, {\em Characterizing graph $C^*$-correspondences}, Houston J. Math. 38 (2012), no. 3, 751--759.
    
    
    
   
   \bibitem{Ka03} T. Katsura, {\em The ideal structures of crossed products of Cuntz algebras by quasi-free actions of abelian groups},  Canad. J. Math. 55 (2003), no. 6, 1302--1338.
    
      \bibitem{Ka08} T. Katsura, {\em A construction of actions on Kirchberg algebras which induce given actions on their
K-groups}, J. reine angew. Math., 617 (2008), 27--65.

    
       
    \bibitem{KK} 
A.~Kishimoto, A.~Kumjian, \emph{Crossed products of Cuntz algebras by
  quasi-free automorphisms}, Fields Inst. Comm., 13(1997), 173--192.    
  
   \bibitem{K} A. Kumjian, {\em On certain Cuntz-Pimsner algebras},
Pacific J. Math. 217 (2004), no. 2, 275--289.   


\bibitem{Mc} J. McKay, {\em Graphs, singularities, and finite groups}, Proc. Sympos. Pure Math. Vol. 37(1980),  183--186.

 \bibitem{MRS}  M.H. Mann, I. Raeburn, C.E. Sutherland, {\em Representations of finite groups and Cuntz-Krieger algebras}, Bull. Austral. Math. Soc. 46(1992), 225--243.


 \bibitem{MS}  P. Muhly, B.  Solel,  {\em On the Morita equivalence of tensor algebras}, Proc. London Math. Soc. (3) 81 (2000), no. 1, 113--168.


\bibitem{RS} I. Raeburn, W. Szyma\' nski, {\em Cuntz-Krieger algebras of infinite graphs and matrices}, Trans. AMS 356(2004), 39--59.

\bibitem{S} J. Spielberg, {\em Non-cyclotomic presentations of modules and prime-order automorphisms of Kirchberg algebras},  J. Reine Angew. Math. 613 (2007), 211--230.

\bibitem {Su} M. Sugiura, {\em Unitary representations and harmonic analysis, An introduction, second edition}, North-Holland, 1990.

\bibitem{W} D. Williams, {\em Crossed Products of $C^*$-algebras}, Mathematical Surveys and Monographs, 134. American Mathematical Society, Providence, RI, 2007.


\end{thebibliography}
\end{document}